\newtheorem{lemma}{Lemma}
\newtheorem{proposition}[lemma]{Proposition}
\newtheorem*{theorem}{Theorem}
\newtheorem{corollary}[lemma]{Corollary}
\newtheorem{predf}[lemma]{Definition} 
\newtheorem{preremark}[lemma]{Remark}  
\newenvironment{remark}{\begin{preremark}\rm}{\end{preremark}}
\newtheorem{preremark0}[lemma]{Remark}  
\newtheorem*{prenotation}{Notation}
\numberwithin{equation}{section}
\newcommand{\bZ}{\mathbb{Z}}
\newcommand\lra{\longrightarrow}
\newcommand\hocolim{\operatorname*{hocolim}}
\newcommand{\X}{\mathbf{X}}
\newcommand{\K}{\mathcal{K}}
\def\Id{\mathrm{Id}}
\newcommand{\cB}{\mathcal{B}}
\newcommand{\cF}{\mathcal{F}}
\newcommand{\cS}{\mathcal{S}}
\newcommand{\cX}{\mathcal{X}}
\newcommand\mnote[1]{}
\def\Tot{\operatorname{Tot}}
\def\rank{\operatorname{rank}}
\def\Complex{\mathbf{SComp}}
\def\Poset{\mathbf{Poset}}
\def\K{\mathscr{K}}
\def\X{\mathscr{X}}
\def\Setp{\mathbf{Set}_\bullet}
\def\Set{\mathbf{Set}}
\def\Topp{\mathbf{Top}_\bullet}
\def\Top{\mathbf{Top}}
\def\boldast{\mathbf{\ast}}
\def\Kh{Kh}
\def\jmin{j_{\mathrm{min}}}
\def\Fmin{\cF^{\jmin}}
\def\Fmintop{\tilde{\cF}^{\jmin}}
\def\Fset{\hat{\cF}^{\jmin}}
\def\Gmin{\Fmintop_+}
\newcommand{\cube}[1]{\mathbf{2^{#1}}}
\newcommand{\cubemas}[1]{\mathbf{2^{#1}_+}}
\def\cero{\vec{0}}
\def\uno{\vec{1}}
\title{Extreme Khovanov spectra}
\author{Federico Cantero Mor\'an and Marithania Silvero}
\thanks{Both authors were supported by project MTM2016-76453-C2 (AEI/FEDER, UE) and acknowledge financial support from the Spanish Ministry of Economy and Competitiveness through the Mar\'ia de Maeztu Programme for Units of Excellence in R\&D (MDM-2014-0445).}
\begin{document}
\begin{abstract}
We prove that the spectrum constructed by Gonz\'alez-Meneses, Manch\'on and the second author is stably homotopy equivalent to the Khovanov spectrum of Lipshitz and Sarkar at its extreme quantum grading.
\end{abstract}
\maketitle

\vspace{-0.2cm}

\section{Introduction}

Khovanov homology is a powerful link invariant introduced by Mikhail Khovanov in \cite{Khovanov00} as a categorification of the Jones polynomial. More precisely, given an oriented diagram $D$ representing a link $L$, he constructed a finite $\bZ$-graded family of chain complexes
\[\xymatrix{\ldots\ar[r]& C^{i,j}(D)\ar[r]^-{d_i} & C^{i+1,j}(D)\ar[r]^-{d_{i+1}} & C^{i+2,j}(D)\ar[r]&\ldots}\]
whose bigraded homology groups, $\Kh^{i,j}(D)$, are link invariants satisfying 
$$ J(L) (q) = \sum_{ij} q^j(-1)^i \rank(Kh^{i,j}(L)),
$$
where $J(L)$ is the Jones polynomial of $L$. The groups $\Kh^{i,j}(L)$ are known as \textit{Khovanov homology groups} of $L$, and the indexes $i$ and $j$ as \emph{homological} and \emph{quantum gradings}, respectively. 

A decade later, Lipshitz and Sarkar \cite{LSKhovanov} constructed a $\bZ$-graded family of spectra $\cX^{j}(D)$ associated to a link diagram $D$, and they proved that
\begin{quote} \emph{For each $j\in \bZ$, the spectrum $\cX^j(D)$ is a link invariant up to homotopy and there is a canonical isomorphism $H^*(\cX^j(D))\cong \Kh^{*,j}(D)$.}
\end{quote}
The construction of these spectra was later simplified in \cite{LSS} and \cite{LSS2}, where it was shown that each spectrum $\cX^j$ can be obtained as the suspension spectrum of the realisation of a certain cubical functor on pointed topological spaces.

For a given link diagram $D$, the Khovanov chain complex is trivial for all but finitely many $j$'s. Let $\jmin(D)$ 
 be the minimal 
 quantum grading such that the complex $\{C^{i,j}(D), d_i\}$ is non-trivial. In \cite{GMS} Gonz\'alez-Meneses, Manch\'on and Silvero introduced a simplicial complex $X_D$ satisfying the following
\begin{quote} \emph{The simplicial complex $X_D$, if not contractible, is a link invariant up to stable homotopy and there is a canonical isomorphism $H^{*+n_--1}(X_D)\cong \Kh^{*,\jmin}(D)$, with $n_-$ the number of negative crossings of $D$.}
\end{quote}
In this paper we show that, for the minimal quantum grading, both constructions are stably homotopy equivalent.

\section{A stable homotopy equivalence}

\subsection{States and enhancements} Let $\cube{n}$ be the poset $\{1\to 0\}^n$, which has an initial element $\uno = (1,1,\ldots,1)$ and a terminal element $\cero=(0,0,\ldots,0)$, and write $|v| = \sum_{i=1}^{n} v_i$.

Let $D$ be an oriented link diagram with $n$ ordered crossings, where $n_+$ ($n_-$) of them are positive (negative). A state is an assignation of a label, $0$ or $1$, to each crossing in $D$. There is a bijection between the set $\mathcal{S}$ of states of $D$ and the elements of $\cube{n}$ by considering $v \in \cube{n}$ as the state that assigns the $i^{th}$ coordinate of $v$ to the $i^{th}$ crossing of $D$. Write $D(v)$ for the set of topological circles and chords obtained after smoothing each crossing of $D$ according to its label by following Figure \ref{Labels}.

\begin{figure}
\centering
\includegraphics[width = 8.5cm]{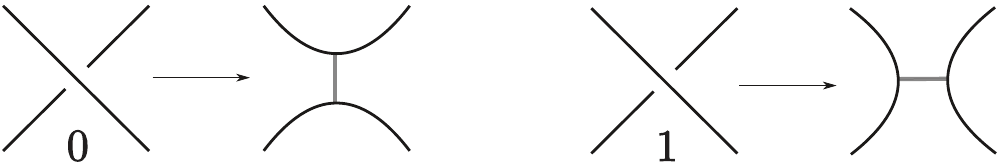}
\caption{\small{The smoothing of a crossing according to its $0$ or $1$ label.}} 
\label{Labels}
\end{figure}

An enhacement of a state $v$ is a map $x$ assigning a sign $\pm 1$ to each of the $|D(v)|$ circles in $D(v)$. Write $\tau(v,x) = \sum x(c)$ where $c$ ranges over all circles in $D(v)$, and define, for the enhanced state $(v,x)$, the integers
$$h(v,x) = h(v) = -n_- + |v|, \quad \quad q(v,x) = n_+ - 2n_- + |v| + \tau(v,x). $$

Let $j_{\min} = \min\{{q(v,x) \mid (v,x) \mbox{ is an enhanced state of } D}\}$, and for any state $v$ write $v^- = (v,x_-)$ with $x_-$ the constant function with value $-1$.
\begin{proposition}\cite[Proposition 4.1]{GMS}\label{prop:1_Lenguaje_Nudos}
In this setting, $j_{\min} = q(\cero^-)$ and $q(v,x) = j_{\min}$ if and only if $(v,x) \in \mathcal{S}_{\min}$, where
$$\mathcal{S}_{\min} =  \{\text{enhanced states $(v,x)$ such that $|D(v)| = |D(\cero)| + |v|$ and $x=x_-$}\}.$$
In particular, $j_{\min} = n_+ - 2n_- - |D(\cero)|$.
\end{proposition}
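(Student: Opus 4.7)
The plan is to minimize $q(v,x) = n_+ - 2n_- + |v| + \tau(v,x)$ by decoupling the dependence on the enhancement $x$ from the dependence on the state $v$. For a fixed state $v$, the term $\tau(v,x) = \sum_c x(c)$ ranges over $\{-|D(v)|, -|D(v)|+2, \ldots, |D(v)|\}$, attaining its minimum value $-|D(v)|$ precisely when $x = x_-$. Thus any minimizing enhanced state must be of the form $v^- = (v, x_-)$, and the problem reduces to minimizing
\[
q(v^-) = n_+ - 2n_- + |v| - |D(v)|
\]
over $v \in \cube{n}$.

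Next I would establish the combinatorial inequality $|D(v)| \leq |D(\cero)| + |v|$, with equality exactly when $|D(v)| = |D(\cero)| + |v|$ (the defining condition for $\mathcal{S}_{\min}$). I would argue by induction on $|v|$, using the well-known fact that changing a single smoothing in a state either merges two circles (so $|D|$ decreases by $1$) or splits one circle into two (so $|D|$ increases by $1$). Consequently, along any monotone path in $\cube{n}$ from $\cero$ to $v$, the cardinality $|D|$ can go up by at most $1$ at each of the $|v|$ steps, giving the bound. It follows that
\[
q(v^-) \geq n_+ - 2n_- - |D(\cero)| = q(\cero^-),
\]
with equality iff every step from $\cero$ to $v$ was a splitting, i.e.\ iff $|D(v)| = |D(\cero)| + |v|$.

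Combining the two paragraphs, the minimum over all enhanced states is attained at $\cero^-$, so $\jmin = q(\cero^-) = n_+ - 2n_- - |D(\cero)|$; and an enhanced state $(v,x)$ realizes $\jmin$ if and only if $x = x_-$ and $|D(v)| = |D(\cero)| + |v|$, which is exactly the definition of $\mathcal{S}_{\min}$. The only genuine input is the splitting/merging dichotomy for a single coordinate flip, which is the step I expect to require the most care to state cleanly, though it is standard in the Khovanov literature and could be cited rather than proved from scratch.
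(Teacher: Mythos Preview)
Your argument is correct and is the standard one. Note, however, that the paper does not supply its own proof of this proposition: it is stated as a citation of \cite[Proposition~4.1]{GMS}, with no proof environment following. The ingredients you use---minimising $\tau(v,x)$ by taking $x=x_-$, and then bounding $|D(v)|\leq |D(\cero)|+|v|$ via the splitting/merging dichotomy along a monotone path from $\cero$ to $v$---are exactly the ones invoked in the cited source, and indeed the paper itself deploys the same dichotomy in the proof of the very next proposition (Proposition~\ref{prop:2_Lenguaje_Nudos}).
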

%
\begin{proposition}\label{prop:2_Lenguaje_Nudos}  Let $v\in \cS_{\min}$. If $u< v$ then $u^-\in \cS_{\min}$.
\end{proposition}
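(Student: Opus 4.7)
The strategy is to reduce the statement to a single elementary fact from Khovanov's cube of resolutions: flipping the label at one crossing from $0$ to $1$ (or vice versa) either merges two circles of $D(\cdot)$ into one or splits one circle into two, so $|D(\cdot)|$ changes by exactly $\pm 1$ in a single step.

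The first step I would carry out is to establish the general inequality
\[
|D(w')| - |D(w)| \leq |w'| - |w| \qquad \text{whenever } w \leq w' \text{ in } \cube{n},
\]
obtained by induction along any monotone path from $w$ to $w'$ in the cube: each step flips one coordinate from $0$ to $1$, increasing $|w|$ by exactly $1$ while changing $|D|$ by at most $+1$. Applying this with $w = \cero$ recovers the upper bound $|D(w')| \leq |D(\cero)| + |w'|$ that underlies the characterisation of $\cS_{\min}$ in Proposition \ref{prop:1_Lenguaje_Nudos}.

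Next, given $v \in \cS_{\min}$ (so $|D(v)| = |D(\cero)| + |v|$) and $u < v$, I would apply this inequality twice: once to the pair $\cero \leq u$ to obtain $|D(u)| \leq |D(\cero)| + |u|$, and once to the pair $u \leq v$ to obtain $|D(v)| - |D(u)| \leq |v| - |u|$. Substituting the hypothesis $|D(v)| = |D(\cero)| + |v|$ into the second inequality forces $|D(u)| \geq |D(\cero)| + |u|$, and combining with the first yields the equality $|D(u)| = |D(\cero)| + |u|$. By Proposition \ref{prop:1_Lenguaje_Nudos} it follows that $u^- = (u, x_-) \in \cS_{\min}$.

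I do not expect any real obstacle here: the whole argument is a short squeeze on $|D(u)|$ built on the elementary ``circles merge or split by one'' behavior of single-crossing smoothings. The only small subtlety is that membership in $\cS_{\min}$ prescribes the enhancement to be $x_-$, so once the combinatorial identity $|D(u)| = |D(\cero)| + |u|$ is verified for the underlying state, the enhanced state $u^-$ automatically qualifies.
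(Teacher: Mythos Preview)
Your proposal is correct and follows essentially the same route as the paper: both arguments rest on the elementary fact that a single $0\to 1$ flip changes $|D(\cdot)|$ by exactly $\pm 1$, and both exploit that $|D(v)|=|D(\cero)|+|v|$ forces every step along a path $\cero\to u\to v$ to be a split. The paper phrases this directly (``$v$ is obtained from $u$ by $k$ splittings''), whereas you recast it as a clean squeeze on $|D(u)|$ via two applications of the inequality $|D(w')|-|D(w)|\leq |w'|-|w|$; the content is the same.
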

\begin{proof} 
Note that the Khovanov differential $d$ either splits one circle into two or merges two circles into one. As $|D(v)| = |D(\cero)| + |v|$, necessarily $v$ is obtained from $\cero$ by performing $|v|$ splittings in the crossings corresponding to non-zero coordinates of $v$. Hence, if $u$ and $v$ differ on $k$ coordinates, $v$ is obtained from $u$ by performing $k$ splittings, that is, $|D(u)| = |D(v)|-k = |D(\cero)| + |u|$.
\end{proof}

\subsection{The simplicial complex for extreme Khovanov homology} Let $D$ be an oriented link diagram. In \cite{GMS}, a simplicial complex $X_D$ was constructed, whose simplicial cochain complex is canonically isomorphic to the extreme Khovanov complex $\{C^{i,\jmin}(D),d_i\}$ shifted by $n_--1$. Next, we review the construction of $X_D$ (cf.\ Figure \ref{Hexagon}).
  
The Lando graph $G_D$ associated to $D$ consists of a vertex for each chord in $D(\cero)$ having both endpoints in the same circle, and an edge between two vertices if the endpoints of the corresponding chords alternate in the same circle. The simplicial complex $X_D$ is defined as the independence complex of the graph $G_D$; in other words, the simplices of $X_D$ are the subsets of pairwise non-adjacent vertices of $G_D$. Alternatively, it is the clique complex of the complement graph of $G_D$. 

\begin{figure}
\centering
\includegraphics[width = 12.1cm]{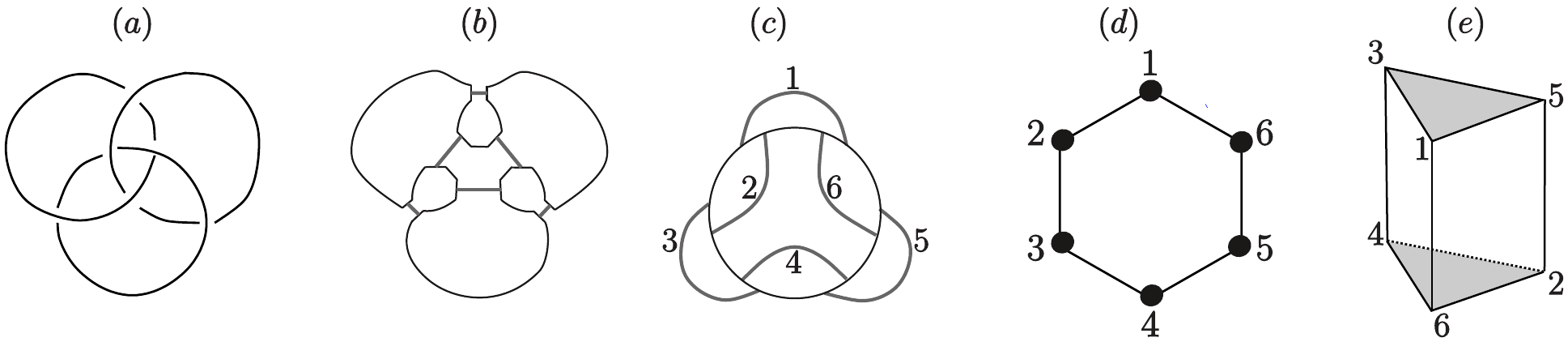}
\caption{\small{$(a)$ A link diagram $D$; $(b)$ and $(c)$ are two versions of $D(\cero)$; $(d)$ is the associated Lando graph $G_D$; $(e)$ is the geometric realisation of the simplicial complex $X_D$.}} 
\label{Hexagon}
\end{figure}



\subsection{Functors to the Burnside category} Let $\Topp$ be the category of pointed topological spaces with basepoint $\boldast$. Let $\Setp$ be the category of pointed sets, which we wiew as a subcategory of $\Topp$ as the subcategory of discrete spaces. Let $\cB$ be the Burnside $2$-category for the trivial group, whose objects are finite sets, morphisms are spans and $2$-morphisms are correspondences. We will freely refer to the results and notation of \cite{LSS} and \cite{LSS2} in what follows (see also the survey \cite{LS17}).

The category $\Setp$ sits inside $\cB$ by sending a pointed set $A$ to $A \smallsetminus \{\boldast\}$, and a morphism $f\colon A\to B$ to the span 
\[A\smallsetminus \{\boldast\} \hookleftarrow A\smallsetminus f^{-1}(\boldast)\overset{f}{\lra} B\smallsetminus \{\boldast\}.\]

Recall from \cite[Definition~5.1]{LSS} that an \emph{$N$-dimensional spatial refinement} of a functor $F\colon \cube{n}\to \cB$ is another functor $\tilde{F}\colon \cube{n}\to \Topp$ with values in wedges of spheres of dimension $N$ satisfying certain properties. The following observation is straightforward from that definition:
\begin{lemma} A functor $F\colon \cube{n}\to \cB$ has a $0$-dimensional spatial refinement $\tilde{F}$ if and only if $F$
factors as $\cube{n}\to \Setp\hookrightarrow \cB$. If this is the case, the refinement is $\tilde{F}\colon \cube{n}\to \Setp\subset \Topp$.
\end{lemma}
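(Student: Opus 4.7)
The plan is to reduce the statement to the elementary observation that a wedge of $0$-spheres is exactly a finite pointed discrete space, so any functor $\tilde F\colon\cube{n}\to \Topp$ taking values in wedges of $0$-spheres factors uniquely as $\cube{n}\to \Setp\subset\Topp$; once this is in place, both implications reduce to matching the spans on the two sides of the equivalence.

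For the ``if'' direction, given a factorisation $F=\iota\circ G$ with $G\colon\cube{n}\to \Setp$ and $\iota\colon\Setp\hookrightarrow\cB$ the inclusion recalled above, I would simply take $\tilde F$ to be $G$ composed with the inclusion $\Setp\subset\Topp$. Each $\tilde F(v)$ is then a wedge of $0$-spheres by construction, and for each edge $u\to v$ of $\cube{n}$ the span produced by $\tilde F(u\to v)$ via the spatial refinement recipe is
$$\tilde F(u)\smallsetminus\{\boldast\}\hookleftarrow \tilde F(u)\smallsetminus \tilde F(u\to v)^{-1}(\boldast)\lra \tilde F(v)\smallsetminus\{\boldast\},$$
which by construction of $\iota$ is exactly the span $F(u\to v)$. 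The higher coherence data required by \cite[Definition~5.1]{LSS} is automatic because $\Setp$ is a strict $1$-category, so the relevant subcube diagrams commute on the nose.

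For the converse, suppose $F$ has a $0$-dimensional spatial refinement $\tilde F$. Since each $\tilde F(v)$ is a wedge of $0$-spheres, it is a finite pointed discrete space, and any pointed continuous map between such spaces is a morphism in $\Setp$; hence $\tilde F$ factors uniquely as $\cube{n}\to\Setp\subset\Topp$. The refinement axiom then forces the span associated to each pointed set map $\tilde F(u\to v)$ to agree with $F(u\to v)$ in $\cB$, and these are by construction the spans coming from $\iota$. Thus $F=\iota\circ \tilde F$ as functors to $\cB$, which simultaneously identifies the refinement as claimed in the addendum.

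The only step requiring attention is the identification of the span associated to a pointed map between wedges of $0$-spheres in the sense of \cite[Definition~5.1]{LSS} with the defining span of $\iota$; this is essentially tautological once the definition is unwound in dimension zero, and I do not expect any substantial obstacle.
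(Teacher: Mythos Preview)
Your proposal is correct and is precisely the unpacking of \cite[Definition~5.1]{LSS} that the paper has in mind; the paper itself gives no proof beyond the remark that the lemma is ``straightforward from that definition,'' and your argument is the natural way to make that remark explicit.
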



Let $\cubemas{n}$ be the poset obtained as follows: Take a second copy of $\cube{n}$, and rename its terminal object $\cero$ as $\circ$. The poset $\cubemas{n}$ is the union of both copies along the subposet $\cube{n}\smallsetminus \{\cero\}$. Alternatively, it is the result of adding two cones to $\cube{n}\smallsetminus \{\cero\}$ with apices $\cero$ and $\circ$. If $\tilde{F}\colon \cube{n}\to \Topp$ is an $N$-dimensional spatial refinement, then its totalisation is defined as follows: extend $\tilde{F}$ to a functor $\tilde{F}_+\colon \cubemas{n}\to \Topp$ by declaring $\tilde{F}_+(\circ)=\boldast$ and define:
\[\Tot \tilde{F} = \hocolim \tilde{F}_+\in \Topp.\]

\subsection{Khovanov spectra} Fix a link diagram $D$ and let $\cF\colon \cube{n}\to \cB$ be the functor constructed in \cite[Proposition 6.1]{LSS2} whose value at a vertex $v$ is the set of all possible enhancements associated to the state $v$. Let $\cF^j$ be the subfunctor whose values are those enhancements with quantum grading $j$. If $\tilde{\cF}^j$ is an $N$-dimensional spatial refinement of $\cF^j$, then the Khovanov spectrum of Lipshitz and Sarkar in quantum grading $j$ is \cite[Theorem~3]{LSS}
\begin{equation}
\tag{1}\label{eq:1}  \cX^{j}\simeq \Sigma^{-N-n_-}\Sigma^\infty\Tot \tilde{F}^j.
\end{equation}
When $j=\jmin$, we can restate Propositions \ref{prop:1_Lenguaje_Nudos} and \ref{prop:2_Lenguaje_Nudos} in the following way:

\begin{proposition}\label{prop:Fmin} The value of $\Fmin$ at a vertex $v\in \cube{n}$ is either the singleton $x_-$ for the case when $(v, x_-) \in \cS_{\min}$, or empty otherwise. Moreover, the value of $\Fmin$ at an arrow $v>u$ is, depending on the values of $\Fmin(u)$ and $\Fmin(v)$, \mnote{fc:ampliar tabla}

\[\begin{array}{|c|c|c|} \hline
 & \Fmin(v) = \emptyset & \Fmin (v)=x_-  \\ \hline
\Fmin(u) = \emptyset & \Id_\emptyset& \nexists \\ \hline
\Fmin(u)=x_-  & \emptyset \to \Fmin(u)& \Fmin(v)\cong \Fmin(u) \\ \hline
\end{array}\]
\end{proposition}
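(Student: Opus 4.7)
The vertex statement is immediate from Proposition \ref{prop:1_Lenguaje_Nudos}: an enhancement $(v,x)$ has $q(v,x)=\jmin$ if and only if it lies in $\cS_{\min}$, which forces $x=x_-$ and $|D(v)|=|D(\cero)|+|v|$. Thus $\Fmin(v)$ is the singleton $\{x_-\}$ when this combinatorial condition on $v$ holds, and empty otherwise.

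For the morphism table I would view $\Fmin(v>u)$ as the restriction of the Burnside span $\cF(v>u)=(\cF(v)\leftarrow Y\to\cF(u))$, obtained by pulling back along the inclusion $\Fmin(v)\times\Fmin(u)\hookrightarrow\cF(v)\times\cF(u)$. Three of the four entries are then formal: the ``$\nexists$'' entry is precisely Proposition \ref{prop:2_Lenguaje_Nudos}, and if either $\Fmin(u)$ or $\Fmin(v)$ is empty then the apex of the pulled-back span is forced to be empty, yielding the entries ``$\Id_\emptyset$'' and ``$\emptyset\to\Fmin(u)$''.

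The case requiring a real computation is when both $\Fmin(u)$ and $\Fmin(v)$ are singletons, so that $u,v\in\cS_{\min}$. By Proposition \ref{prop:2_Lenguaje_Nudos}, every $w$ with $u\leq w\leq v$ is also in $\cS_{\min}$, so the span $\cF(v>u)$ factors, via pseudofunctoriality of $\cF$, as a fibre-product composition of atomic spans between states of $\cS_{\min}$. For any atomic edge between two such states, the identity $|D(v)|-|D(u)|=|v|-|u|$ forces the edge to correspond to splitting one circle into two rather than to a merge. The construction of $\cF$ in \cite[Proposition 6.1]{LSS2} then assigns to this edge the span encoding the Khovanov comultiplication on the split circle together with identities on the remaining circles; since comultiplication carries the $(-1)$-label to the pair of $(-1)$-labels on the two resulting circles, the pair $(x_-,x_-)$ is the unique matched pair in the atomic span whose projections remain all-$(-1)$. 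Composing these atomic identity spans along any maximal chain yields the identity span, which is the entry $\Fmin(v)\cong\Fmin(u)$. The main obstacle is ruling out competing matched pairs in the restricted Burnside span; this reduces to noting that any enhancement assigning $+1$ to some circle has $q>\jmin$ and therefore lies outside $\cS_{\min}$, so is discarded by the pullback.
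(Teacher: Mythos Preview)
Your proof is correct and follows the same line as the paper, which simply introduces the proposition with the phrase ``we can restate Propositions~\ref{prop:1_Lenguaje_Nudos} and~\ref{prop:2_Lenguaje_Nudos} in the following way'' and gives no further argument. In particular, the paper leaves implicit the verification of the bottom-right entry of the table; your explicit unpacking of the atomic split spans and the observation that the comultiplication carries $x_-$ to the single term $x_-\otimes x_-$ is exactly the content needed to justify that entry, and is a genuine addition over what the paper writes down.
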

In particular, we obtain the following corollary:
\begin{corollary}\label{cor:spatial} $\Fmin$ factors through $\Setp$ and therefore the factorisation $\Fmintop$ is the $0$-dimensional spatial refinement of $\Fmin$. In fact, it further factors through the inclusion $\Set\subset \Setp$ sending a set $A$ to the pointed set $A\cup \{\boldast\}$. If we write $\Fset$ for the latter factorisation, we get
\[\xymatrix{
\cube{n} \ar[d]^{\Fset}\ar[drr]^{\Fmintop} \ar[rr]^{\Fmin} &&\cB\\
\Set \ar@{^{(}->}[rr] && \Setp \ar@{^{(}->}[u]
}\]
\end{corollary}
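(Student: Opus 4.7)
The plan is to extract the factorisation directly from the explicit description of $\Fmin$ given in Proposition \ref{prop:Fmin}. That proposition lists every value of $\Fmin$ as either $\emptyset$ or the singleton $\{x_-\}$, and every span on an arrow in one of only three possible forms. The key observation is that each of those three spans, viewed in $\cB$, is precisely what the composite $\Set \hookrightarrow \Setp \hookrightarrow \cB$ (with the first arrow adjoining a disjoint basepoint $\boldast$) produces from an honest unpointed set map: namely $\id_\emptyset$, the empty map $\emptyset \to \{x_-\}$, and $\id_{\{x_-\}}$.

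First I would set $\Fset(v) = \{x_-\}$ when $(v, x_-) \in \cS_{\min}$ and $\Fset(v) = \emptyset$ otherwise, reading these directly off the value column of Proposition \ref{prop:Fmin}. For an arrow $v > u$, I would take the unique set map $\Fset(v) \to \Fset(u)$ whenever one exists. The only configuration in which no such map exists is $\Fset(v) = \{x_-\}$ and $\Fset(u) = \emptyset$, but this is precisely the $\nexists$ entry of the table, which is an incarnation of Proposition \ref{prop:2_Lenguaje_Nudos}. Hence $\Fset$ is well-defined on every arrow.

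Functoriality is the only remaining check, and it is essentially trivial: between any two of $\emptyset$ and $\{x_-\}$ there is at most one map in the required direction, so any two composites with the same source and target automatically agree. The only way this could obstruct functoriality is via a chain $w > v > u$ with $\Fset(w) = \Fset(u) = \{x_-\}$ and $\Fset(v) = \emptyset$, but such a chain is again excluded by Proposition \ref{prop:2_Lenguaje_Nudos} applied to $w > v$. With $\Fset$ in hand, the composite $(\Setp \hookrightarrow \cB) \circ (\Set \hookrightarrow \Setp) \circ \Fset$ matches the table of Proposition \ref{prop:Fmin} span by span, so it equals $\Fmin$, and the preceding lemma identifies $\Fmintop = (\Set \hookrightarrow \Setp) \circ \Fset$ as a $0$-dimensional spatial refinement. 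There is no substantive obstacle here; the corollary is essentially an unpacking of Proposition \ref{prop:Fmin}.
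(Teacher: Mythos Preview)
Your proposal is correct and is exactly the intended argument: the paper offers no separate proof, recording the corollary as an immediate consequence of Proposition~\ref{prop:Fmin}, and your write-up is simply a careful unpacking of that implication. The only point you leave implicit is the $2$-categorical coherence data in $\cB$, but since every value of $\Fmin$ is empty or a singleton, all spans and correspondences between them are unique, so this is automatic.
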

\subsection{A homotopy equivalence} Let $\Poset$ be the category of posets, and let $\Complex$ be the category of simplicial complexes. There are functors
\[\xymatrix{\Poset \ar@<.5ex>[r]^-{\K} & \ar[l]<.5ex>^-{\X} \Complex \ar[r]^-{|\, \cdot\, |} & \Top,}\]
where $\X$ takes a simplicial complex to its poset of non-empty faces, $\K$ takes a poset $P$ to the simplicial complex whose $0$-simplices are the elements of $P$, and whose $i$-simplices are ascending chains of $i+1$ elements in $P$. The functor $|\cdot|$ takes a simplicial complex to its realisation. The composition $\K\circ \X$ takes a simplicial complex $Y$ to its barycentric subdivision $\mathrm{sd}(Y)$. We will denote the composition $|K(\cdot)|$ by $\|\cdot\|$. If $P$ is a poset and $F\colon P\to \Top$ is a functor taking every element of $P$ to a singleton, then $\|P\|$ is a model for the homotopy colimit of $F$.

Let $\cS_{\min}'\subset \cube{n}$ be the subposet of those states $v$ such that $(v,x_-)\in \cS_{\min}$.  
The poset $\cube{n}$ can be identified with the poset of faces of the $(n-1)$-dimensional simplex with the arrows reversed, where we identify $\cero$ with the empty face and $\uno$ with the top-dimensional face. Under this identification, the poset of faces of $X_D$ becomes precisely $\cS_{\min}'$ \cite[Proposition 4.3]{GMS}. Therefore, if $F\colon \cS_{\min}'\to \Top$ is a functor with values on singletons, then
\begin{equation}\tag{2}\label{eq:2}\hocolim F\simeq \|\cS_{\min}'\smallsetminus \{\cero\}\| = |\mathrm{sd}(X_D)|\cong |X_D|.\end{equation}

\begin{theorem}\label{prop:spectra} There is a homotopy equivalence
\[\cX^{\jmin}\simeq \Sigma^{1-n_-}\Sigma^\infty |X_D|.\]
\end{theorem}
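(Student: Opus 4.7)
The plan is to apply equation (\ref{eq:1}) and reduce the theorem to showing $\Tot\Fmintop\simeq\Sigma|X_D|$ (the unreduced suspension). By Corollary~\ref{cor:spatial}, $\Fmintop$ is the $0$-dimensional spatial refinement of $\Fmin$, so equation (\ref{eq:1}) with $N=0$ gives $\cX^{\jmin}\simeq\Sigma^{-n_-}\Sigma^\infty\Tot\Fmintop$; combined with the equivalence $\Sigma^\infty\Sigma|X_D|\simeq\Sigma\Sigma^\infty|X_D|$, this will yield the claim.

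To compute $\Tot\Fmintop=\hocolim^{\Topp}_{\cubemas{n}}\tilde{F}_+$, I would use the pushout of posets $\cubemas{n}=\cube{n}\cup_{\cube{n}\setminus\{\cero\}}\bigl((\cube{n}\setminus\{\cero\})\cup\{\circ\}\bigr)$, the second factor being the cone with apex $\circ$. Since pointed homotopy colimits commute with pushouts of indexing categories, $\Tot\Fmintop$ is a homotopy pushout of the three hocolims over these posets.

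For the first two pieces, Corollary~\ref{cor:spatial} supplies a factorization $\Fmintop=\Fset_+$ through $\Set\xrightarrow{(\cdot)_+}\Setp$. Since $(\cdot)_+$ is a left adjoint and hence preserves hocolims, the pointed hocolim on any subposet $P\subseteq\cube{n}$ is $(\hocolim^{\Top}_P\Fset)_+$. As $\Fset$ has singleton values on $\cS_{\min}'\cap P$ and empty values elsewhere, its Grothendieck construction identifies with $\cS_{\min}'\cap P$, and so $\hocolim^{\Top}_P\Fset\simeq\|\cS_{\min}'\cap P\|$. For $P=\cube{n}$, the element $\cero$ is the maximum of $\cS_{\min}'$, so $\|\cS_{\min}'\|$ is the cone (with apex $\cero$) on $\|\cS_{\min}'\setminus\{\cero\}\|\simeq|X_D|$ by equation~(\ref{eq:2}), yielding $(C|X_D|)_+$. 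For $P=\cube{n}\setminus\{\cero\}$, this gives $|X_D|_+$. For the third piece, the cone poset has $\circ$ as terminal object with $\tilde{F}_+(\circ)=\boldast$, so its pointed hocolim is the pointed mapping cone on $\hocolim^{\Topp}_{\cube{n}\setminus\{\cero\}}\Fmintop\to\boldast$, hence contractible.

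Assembling these,
\[\Tot\Fmintop\simeq(C|X_D|)_+\cup^h_{|X_D|_+}\mathrm{pt}\simeq (C|X_D|)_+/|X_D|_+\cong C|X_D|/|X_D|\cong\Sigma|X_D|.\]
The main obstacle is the technical justification that pointed hocolim commutes with the pushout decomposition of $\cubemas{n}$ and with the functor $(\cdot)_+$; both reduce to standard model-categorical facts. A subtlety worth noting: although $\Fmintop$ factors through $\Set$, the extension $\tilde{F}_+$ to $\cubemas{n}$ does \emph{not}, since $\tilde{F}_+(\circ)=\boldast$ would correspond to $\emptyset\in\Set$ but there is no set map from $\{x_-\}$ to $\emptyset$; this is precisely why the third piece must be handled separately in $\Setp$.
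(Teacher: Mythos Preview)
Your proof is correct and follows essentially the same route as the paper's: both apply equation~(\ref{eq:1}) with the $0$-dimensional refinement of Corollary~\ref{cor:spatial}, decompose $\cubemas{n}$ as a pushout of two cones over $\cube{n}\smallsetminus\{\cero\}$, and compute the hocolim over $\cube{n}\smallsetminus\{\cero\}$ as $|X_D|_+$ via the factorisation $\Fset$ through $\Set$. The only cosmetic difference is that the paper evaluates the hocolim over the full $\cube{n}$ as $\Gmin(\cero)=\{x_-,\boldast\}$ by terminality and then makes a cofibrant replacement at the end, whereas you compute it directly as $(C|X_D|)_+$, which is already cofibrant and so yields the suspension immediately.
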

\begin{proof}
From \eqref{eq:1} and Corollary \ref{cor:spatial}, we have that $\cX^{\jmin}\simeq \Sigma^{-n_-}\Sigma^\infty\hocolim \Gmin$. We will prove that $\hocolim \Gmin\simeq \Sigma\|\cS_{\min}'\smallsetminus \{\cero\}\|$, and the result will follow from the homeomorphism $\|\cS_{\min}'\smallsetminus \{\cero\}\|\cong |X_D|$.

As $\cubemas{n}$ is constructed as the pushout of two cubes, there is a pushout diagram
\[\xymatrix{
\hocolim \Gmin|_{\cube{n}\smallsetminus \{\cero\}}\ar[r]\ar[d] & \hocolim \Gmin|_{\cube{n}}\ar[d] \\
\hocolim \Gmin|_{\cubemas{n}\smallsetminus \{\cero\}}\ar[r] & \hocolim \Gmin,
}\]
and as the two cubes have final elements $\cero$ and $\circ$, we have
\[\hocolim \Gmin|_{\cube{n}}\simeq \Gmin(\cero)=\{x_-,\boldast\},\quad \hocolim \Gmin|_{\cubemas{n}\smallsetminus \{\cero\}}\simeq \Gmin(\circ)=\boldast.\]
We now proceed to the computation of the upper left term in the diagram. Recall from the second part of Corollary \ref{cor:spatial} that $\Fmintop$ factors as $\Fset\colon \cube{n}\to \Set\subset \Setp$. Since the inclusion $\Top\subset \Topp$ is a left adjoint, it preserves colimits, and therefore
\[\hocolim \Gmin|_{\cube{n}\smallsetminus \{\cero\}} = \hocolim \Fset|_{\cube{n}\smallsetminus \{\cero\}}\cup \{\boldast\}\]
Now, from Proposition \ref{prop:Fmin}, it follows that $\Fset(u)$ is either a point or empty depending on whether $u$ belongs to $\cS_{\min}'$ or not; therefore 
\[\hocolim \Fset|_{\cube{n}\smallsetminus \{\cero\}} = \hocolim \Fset|_{\cS_{\min}'\smallsetminus \{\cero\}}\]
and since the latter functor is constant with values on singletons, \eqref{eq:2} leads to
\[\hocolim \Fset|_{\cS_{\min}'\smallsetminus \{\cero\}}\simeq \|\cS_{\min}'\smallsetminus\{\cero\}\|.\]
Finally, we face again the original pushout diagram in $\Topp$:
\[\xymatrix{
\|\cS_{\min}'\smallsetminus \{\cero\}\|\cup \{\boldast\} \ar[r]\ar[d] & \boldast\ar[d] \\
\{x_-,\boldast\}\ar[r] & \hocolim \Gmin,
}\]
where the left vertical map collapses $\|\cS_{\min}'\smallsetminus \{\cero\}\|$ to $\{x_-\}$. Replacing $\{x_-,\boldast\}$ by $\mathrm{Cone}(\|\cS_{\min}'\smallsetminus \{\cero\}\|)\cup \{\boldast\}$ and the $\boldast$ in the upper right corner by $\mathrm{Cone}(\|\cS_{\min}'\smallsetminus \{\cero\}\|)$ with basepoint the apex of the cone, we obtain a homotopy equivalent cofibrant pushout diagram, whose colimit is the (unreduced) suspension of $\|\cS_{\min}'\smallsetminus \{\cero\}\|$.
\end{proof}

\begin{remark} One can similarly define a maximal quantum grading $j_{\max}$ and define a simplicial complex $Y_D$ as the Alexander dual of $X_{D^*}$ where $D^*$ is the mirror image of $D$ (cf.\ \cite[Theorem 7.4]{PrzytyckiSilvero}). The fact that the Khovanov spectrum of a link diagram is the Spanier-Whitehead dual of the Khovanov spectrum of its mirror image, immediately implies that $\cX^{j_{\max}}\simeq \Sigma^{n_+-1}\Sigma^\infty Y_D$. 
\end{remark}

\bibliographystyle{amsalpha}
\bibliography{biblio-article}

\end{document}